\documentclass{amsart}

\usepackage{amsmath, amssymb}

\theoremstyle{plain}
\newtheorem{theorem}{Theorem}[section]

\newtheorem{lemma}[theorem]{Lemma}
\newtheorem{proposition}[theorem]{Proposition}

\theoremstyle{remark}
\newtheorem*{remark}{Remark}

\theoremstyle{definition}

\numberwithin{figure}{section}

\newcommand{\DD}{{\mathbb D}}
\newcommand{\cA}{{\mathcal A}}
\newcommand{\cB}{{\mathcal B}}
\newcommand{\cC}{{\mathcal C}}
\newcommand{\cS}{{\mathcal S}}

\DeclareMathOperator{\hol}{Hol}

\begin{document}

\author{Philippe Drouin}
\address{D\'epartement de math\'ematiques et de statistique, Universit\'e Laval, Qu\'ebec (QC), Canada G1V 0A6}
\email{philippe.drouin.10@ulaval.ca}

\author{Thomas Ransford}
\address{D\'epartement de math\'ematiques et de statistique, Universit\'e Laval, Qu\'ebec (QC), Canada G1V 0A6}
\email{thomas.ransford@mat.ulaval.ca}
\thanks{TR supported by grants from NSERC and from the Canada Research Chairs program.}

\title{Covering numbers and schlicht functions}

\begin{abstract}
We determine upper and lower bounds for the minimal number of balls of a given radius needed to cover the space of schlicht functions.
\end{abstract}

\keywords{Covering number, holomorphic function, schlicht function}
\subjclass[2010]{30C45, 54E35}

\maketitle

\section{Introduction}

Let $(X,d)$ be a  metric space.
The \emph{$\delta$-covering number} of $X$, denoted $N_X(\delta)$, 
is the minimal number of closed balls of radius $\delta$ needed to cover  $X$.
It may happen that $N_X(\delta)=\infty$, but if $X$ is compact then always $N_X(\delta)<\infty$.
The quantity $N_X(\delta)$ arises in connection with the notion of \emph{box-counting dimension} of $X$, which is defined as
\[
\dim_B(X):=\lim_{\delta\to0^+}\frac{\log N_X(\delta)}{\log(1/\delta)},
\]
whenever the limit exists (see e.g.\ \cite[\S2.1]{Fa14}).
The covering number also plays an important role in the theory of universal approximation
(see e.g.\ \cite{KNR12}). In the latter context, the metric space $X$
is usually a function space. 

In this paper, our aim is to estimate
the covering number for a particular function space,
namely the class $\cS$ of \emph{schlicht functions}.
This is the class of all holomorphic functions $f$ on the open unit disk $\DD$ 
such that $f$ is injective and satisfies $f(0)=0$ and $f'(0)=1$. 
The class $\cS$ arises naturally in connection with the Riemann mapping theorem. It has been studied extensively and is the 
subject of several books (e.g.\ \cite{Du83,Go83,Po75,Sc75}).

\section{Statement of main result}

Let $\hol(\DD)$ denote the set of all holomorphic functions 
on the open unit disk~$\DD$. It is a Fr\'echet space with 
respect to the topology
of uniform convergence on compact subsets of $\DD$.
There are many choices of metric that give rise to this topology.
A very common one is to take
\begin{equation}\label{E:metric}
d(f,g):=\sum_{j\ge1} \lambda_j\min\Bigl\{1,\max_{|z|\le r_j}{|f(z)-g(z)|}\Bigr\},
\end{equation}
where $(\lambda_j)$ is a positive sequence with $\sum_j\lambda_j<\infty$, and $r_j$ is a sequence in $(0,1)$
such that $r_j\to1$.

With respect to this metric, the set $\cS$ is a compact subset 
of $\hol(\DD)$ (see e.g.\ \cite[p.276]{Du83}),
and therefore a compact metric space in its own right. Our aim is to estimate the covering number of this space. 
In order to do this, we need to impose some conditions on the sequences $(\lambda_j)$
and $(r_j)$. We shall assume that there exist constants
$0<\lambda<1$ and $\alpha>0$ such that
\begin{equation}\label{E:conditions}
\lambda_j\asymp \lambda^j
\quad\text{and}\quad
1-r_j\asymp j^{-\alpha}.
\end{equation}
These choices seem reasonable. In the literature,
$\lambda_j$ is nearly always taken to be $2^{-j}$.
Usually $r_j$ is not specified explicitly,
but in \cite[\S9.1]{Du83} it is taken to be $1-1/j$.

(Here and in what follows,  we write $a\lesssim b$ to signify that $a,b$ are positive functions or sequences
such that $a/b$ remains bounded above, and $a\asymp b$
 to signify that $a\lesssim b$ and $b\lesssim a$ hold together.)

The following theorem is our main result.

\begin{theorem}\label{T:main}
Let $d$ be the metric on $\cS$ given by \eqref{E:metric},
where  $(\lambda_j)$ and $(r_j)$ satisfy~\eqref{E:conditions}. Then
\begin{equation}\label{E:main}
\log^2(1/\delta)\lesssim \log N_\cS(\delta)\lesssim \log^{2+\alpha}(1/\delta)\quad (\delta\to0^+).
\end{equation}
\end{theorem}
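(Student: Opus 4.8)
The plan is to prove the upper and lower bounds separately, using different aspects of the geometry of $\mathcal S$.

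For the **upper bound**, the strategy is to exploit the growth and distortion theorems for schlicht functions together with the coefficient bound $|a_n|\le n$ (de Branges / Bieberbach). On the disk $|z|\le r_j$ one has $|f(z)|\le r_j/(1-r_j)^2$, so the Taylor coefficients of $f$ live in boxes of controlled size, and the tail $\sum_{n\ge M}a_nz^n$ on $|z|\le r_j$ is bounded by a geometric-type sum that is small once $M$ is large compared with $1/(1-r_j)$. Thus, to approximate $f$ to within $\delta$ on $|z|\le r_j$, it suffices to quantize finitely many leading coefficients $a_2,\dots,a_{M}$ on a grid of spacing $\sim\delta(1-r_j)^2$. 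Summing the metric contributions in \eqref{E:metric} against the weights $\lambda_j\asymp\lambda^j$, only the first $J\asymp\log(1/\delta)$ terms matter (since the tail $\sum_{j>J}\lambda_j$ is already below $\delta$). For each such $j$ one needs $M_j\asymp(1-r_j)^{-1}\asymp j^{\alpha}$ coefficients, each quantized into $\asymp\log(1/\delta)$ many levels, and counting the total number of grid points gives $\log N_{\mathcal S}(\delta)\lesssim \bigl(\sum_{j\le J} j^{\alpha}\bigr)\log(1/\delta)\asymp J^{1+\alpha}\log(1/\delta)\asymp \log^{2+\alpha}(1/\delta)$.

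For the **lower bound**, the plan is to construct an explicit $\delta$-separated family in $\mathcal S$ whose cardinality is large, since a set of $K$ points that are pairwise more than $2\delta$ apart forces $N_{\mathcal S}(\delta)\ge K$. A natural candidate is a family of functions of the form $f(z)=z+\sum_{n=2}^{M}\epsilon_n c_n z^n$ with signs $\epsilon_n\in\{-1,+1\}$ and carefully chosen amplitudes $c_n$, all arranged to remain schlicht (e.g.\ via a smallness condition ensuring $f'$ stays close to $1$, so injectivity follows from $\mathrm{Re}\,f'>0$). Distinct sign patterns produce functions differing on $|z|\le r_j$ by an amount detectable in the metric; choosing amplitudes $c_n$ that just barely keep pairs $\delta$-separated, and taking $M\asymp\log(1/\delta)$ usable frequencies, a counting/packing estimate should yield $2^{M}$ separated functions and hence $\log N_{\mathcal S}(\delta)\gtrsim M\cdot(\text{bits per coefficient})\gtrsim\log^2(1/\delta)$.

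The **main obstacle** I anticipate is making the lower-bound construction genuinely separated in the metric \eqref{E:metric} rather than merely in a single sup-norm: because the weights $\lambda_j$ decay geometrically while the resolution $1-r_j$ decays only polynomially, one must balance the frequency $M$ and the evaluation radius so that the perturbations are simultaneously large enough to be seen at some $r_j$ with non-negligible weight $\lambda^j$, yet small enough to preserve schlichtness. Getting the two parameters ($M$ and the index $j$ at which separation is witnessed) to both scale like $\log(1/\delta)$, thereby yielding the clean $\log^2$ bound, is the delicate point; the upper bound, by contrast, should be a relatively routine volumetric count once the coefficient tail estimate is in place.
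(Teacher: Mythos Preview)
Your high-level strategy matches the paper's on both sides: sandwich $\cS$ between the small-coefficient class $\{z+\sum a_kz^k:\sum k|a_k|\le1\}$ and the Bieberbach class $\{|a_k|\le k\}$, then pack below and truncate-plus-quantize above. The substantive gap is in the lower-bound packing.

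A family indexed by binary signs $\epsilon_n\in\{\pm1\}$ over $M$ frequencies contains only $2^M$ members, so it can give at best $\log N_{\cS}(\delta)\gtrsim M$. Moreover $M$ cannot exceed $O(\log(1/\delta))$: two sign patterns differing only in slot $k$ yield functions whose $d$-distance is at most of order $\lambda_k\asymp\lambda^k$, which drops below $\delta$ once $k\gg\log(1/\delta)$. Hence your construction delivers only $\log N\gtrsim\log(1/\delta)$, one logarithm short. Your phrase ``$M\cdot(\text{bits per coefficient})\gtrsim\log^2(1/\delta)$'' is the correct target but is inconsistent with a two-point alphabet; each coefficient must range over many values. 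The paper takes $a_k=t_k/(knK)$ with $t_k\in\{1,\dots,K\}$, obtaining $K^{n-1}$ functions that are pairwise separated (via the Cauchy-type estimate $d(f,g)\ge\sum_k(\lambda_kr_k^k/2)|a_k-b_k|$) by at least $\rho^n/K$, and then chooses $K=\rho^{-n}$ to get $N(\rho^{2n})\ge\rho^{-n(n-1)}$, i.e.\ $\log N\gtrsim\log^2(1/\delta)$. The second logarithm comes entirely from $\log K$, not from pushing $M$ higher.

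For the upper bound your plan is the paper's, but the accounting ``$\sum_{j\le J}M_j$ coefficients with $M_j\asymp j^\alpha$'' is not a literal count: a cover uses a single approximant per center, hence a single truncation degree $M$. The rigorous replacement (the paper's approximation lemma) is $d(f,p_M)\le\exp\bigl(-cM^{1/(1+\alpha)}\bigr)$, proved by splitting the $j$-sum at $j\asymp M^{1/(1+\alpha)}$; this forces $M\asymp\log^{1+\alpha}(1/\delta)$, after which quantizing $M$ coefficients on a grid gives $\log N\lesssim M\log(1/\delta)\asymp\log^{2+\alpha}(1/\delta)$. Your sum $\sum_{j\le J}j^\alpha\asymp J^{1+\alpha}$ happens to coincide with this $M$, which is why your arithmetic lands on the right exponent, but the per-$j$ reasoning should be replaced by that single approximation lemma.
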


The lower bound in \eqref{E:main}
is a generalization of a result obtained in \cite{KNR12}.
It shows, in particular, that the box-counting dimension of $\cS$ is infinite. We believe that the upper bound in \eqref{E:main} is new.

\section{Proof of the lower bound in Theorem~\ref{T:main}}\label{S:lower}

Our starting point is the following sufficient condition for membership of $\cS$. The result is well known, but we include a short proof for convenience.

\begin{proposition}\label{P:suff}
Let $f(z):=z+\sum_{k\ge2}a_kz^k$, where $\sum_{k\ge2}k|a_k|\le 1$.
Then $f\in\cS$.
\end{proposition}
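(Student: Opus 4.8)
The plan is to verify the three defining properties of a schlicht function—normalization, holomorphy, and injectivity—for $f(z)=z+\sum_{k\ge2}a_kz^k$ under the hypothesis $\sum_{k\ge2}k|a_k|\le1$. The normalization $f(0)=0$ and $f'(0)=1$ is immediate by reading off the first two Taylor coefficients. Holomorphy on $\DD$ follows once we observe that the hypothesis forces the series to converge: since $\sum_{k\ge2}k|a_k|\le1$, in particular $\sum_{k\ge2}|a_k|<\infty$, so the power series has radius of convergence at least $1$ and defines a holomorphic function on $\DD$. Thus the only substantive point is injectivity, and this is where the bulk of the argument lies.

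For injectivity I would argue directly that $f$ is one-to-one on $\DD$. Take two distinct points $z,w\in\DD$ and estimate $f(z)-f(w)$ from below. Writing
\[
f(z)-f(w)=(z-w)+\sum_{k\ge2}a_k(z^k-w^k),
\]
I factor out $z-w$ using $z^k-w^k=(z-w)\sum_{j=0}^{k-1}z^{j}w^{k-1-j}$, so that
\[
f(z)-f(w)=(z-w)\Bigl(1+\sum_{k\ge2}a_k\sum_{j=0}^{k-1}z^{j}w^{k-1-j}\Bigr).
\]
Since $|z|,|w|<1$, each inner sum has at most $k$ terms of modulus less than $1$, giving $\bigl|\sum_{j=0}^{k-1}z^{j}w^{k-1-j}\bigr|<k$. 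Hence the modulus of the bracketed factor is bounded below by
\[
1-\sum_{k\ge2}|a_k|\cdot k \ge 1-\sum_{k\ge2}k|a_k|\ge 0,
\]
and I would want this to be strictly positive so that $f(z)\ne f(w)$ whenever $z\ne w$.

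The main obstacle is the borderline case of equality: the hypothesis only gives $\sum_{k\ge2}k|a_k|\le1$, so the crude lower bound above degenerates to $\ge0$ rather than $>0$, which is not enough to conclude injectivity by itself. The fix is that the term-by-term estimates $\bigl|\sum_{j=0}^{k-1}z^jw^{k-1-j}\bigr|<k$ are \emph{strict} for $z,w$ in the open disk (each monomial has modulus strictly less than $1$, and there are finitely many dominating terms in any partial sum, with a tail controlled by $\sum_{k\ge2}|a_k|<\infty$). Consequently the bracketed factor has modulus strictly larger than $1-\sum_{k\ge2}k|a_k|\ge0$, so it is nonzero and $f(z)\ne f(w)$. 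I would take care to justify interchanging the sum and the modulus estimate, and to handle the tail uniformly, perhaps by first fixing $\max\{|z|,|w|\}\le r<1$ to get a genuine strict inequality before letting $r\to1$. With injectivity established alongside the normalization and holomorphy, the conclusion $f\in\cS$ follows.
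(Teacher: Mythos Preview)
Your proposal is correct and follows essentially the same route as the paper: factor $z^k-w^k=(z-w)\sum_{j=0}^{k-1}z^jw^{k-1-j}$, bound each inner sum strictly by $k$ on the open disk, and conclude that the bracketed factor is nonzero. The paper streamlines the borderline case simply by disposing of the trivial case $a_k\equiv0$ first and then observing that strictness of $|z^k-w^k|<k|z-w|$ at a single index with $a_k\ne0$ already forces the total inequality to be strict---so your tail/limit maneuver with $r\to1$ is unnecessary.
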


\begin{proof}
We may suppose that at least one $a_k\ne0$,
otherwise the result is obvious.
Let $z,w$ be distinct points of $\DD$. 
Then, by the triangle inequality,
\[
|f(z)-f(w)|\ge |z-w|-\Bigl|\sum_{k\ge2}a_kz^k-\sum_{k\ge2}w^k\Bigr|.
\]
Now
\[
\Bigl |\sum_{k\ge2} a_kz^k-\sum_{k\ge2}a_kw^k\Bigr|
\le \sum_{k\ge2}|a_k||z^k-w^k|
<\sum_{k\ge2}k|a_k||z-w|\le|z-w|,
\]
the middle inequality being strict because at least one $a_k\ne0$.
It follows that $f(z)\ne f(w)$. Thus $f$ is injective, and so $f\in\cS$.
\end{proof}

Let $\cA$ be the family of  functions $f(z):=z+\sum_{k\ge2}a_kz^k$
such that $\sum_{k\ge2}k|a_k|\le1$.
By virtue of Proposition~\ref{P:suff}, we have $\cA\subset\cS$,
and therefore $N_\cS(\delta)\ge N_\cA(2\delta)$.
Thus, to establish the lower bound in Theorem~\ref{T:main},
it suffices to prove the following theorem.

\begin{theorem}\label{T:cA}
Let $d$ be the metric on $\cA$ given by \eqref{E:metric},
where  $(\lambda_j)$ and $(r_j)$ satisfy~\eqref{E:conditions}. Then
\begin{equation}\label{E:cA}
 \log N_\cA(\delta)\gtrsim \log^{2}(1/\delta)\quad (\delta\to0^+).
\end{equation}
\end{theorem}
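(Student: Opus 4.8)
The plan is to bound $N_\cA(\delta)$ from below by exhibiting a large family of functions in $\cA$ that are pairwise separated by more than $2\delta$ in the metric $d$; since any two points lying in a common closed ball of radius $\delta$ are at distance at most $2\delta$, such a family forces $N_\cA(\delta)$ to be at least its cardinality. To control $d$ I would retain only a single term of the sum \eqref{E:metric}: fixing one index $j_0$ (a constant, independent of $\delta$), one has $d(f,g)\ge \lambda_{j_0}\min\{1,\|f-g\|_{j_0}\}$, where $\|h\|_{j_0}:=\max_{|z|\le r_{j_0}}|h(z)|$. The key reduction is that the sup-norm dominates the $L^2$-norm on the circle $|z|=r_{j_0}$, so for $h(z)=\sum_{k}c_kz^k$ one has $\|h\|_{j_0}\ge(\sum_k|c_k|^2 r_{j_0}^{2k})^{1/2}$. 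This turns separation in $d$ into a lower bound on a weighted $\ell^2$-norm of coefficient differences, which can be arranged coordinate by coordinate.

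For the construction I would take $n\asymp\log(1/\delta)$ and consider the functions $f(z)=z+\sum_{k=2}^{n+1}a_kz^k$ whose coefficients are confined to the small disks $|a_k|\le 1/(nk)$. Allocating each coordinate the budget $k|a_k|\le 1/n$ guarantees $\sum_{k}k|a_k|\le1$, so the entire coefficient box lies inside $\cA$; this sidesteps the coupling of the $\ell^1$-type constraint defining $\cA$. Inside each disk I would place a square grid of spacing $\sigma_k:=\rho\, r_{j_0}^{-k}$, where $\rho\asymp\delta/\lambda_{j_0}$. Two distinct grid functions differ in some coordinate $k$ by at least $\sigma_k$, whence $\|f-g\|_{j_0}\ge\sigma_k r_{j_0}^{k}=\rho$ by the orthogonality bound; since $\|f-g\|_{j_0}\le\sum_k|a_k-a_k'|$ is of size $O((\log n)/n)<1$ for small $\delta$, the truncation $\min\{1,\cdot\}$ is inactive, and $d(f,g)\ge\lambda_{j_0}\rho>2\delta$ for a suitable choice of the implied constant in $\rho$.

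It remains to count the grid. Each disk $|a_k|\le1/(nk)$ carries roughly $(1/(nk\sigma_k))^2$ grid points, so the number $N$ of functions satisfies $\log N\gtrsim \sum_{k=2}^{n+1}\bigl[2\log(1/\rho)-2k\log(1/r_{j_0})-O(\log n)\bigr]$, with $\log(1/\rho)=\log(1/\delta)-O(1)$. Writing $c_2:=\log(1/r_{j_0})$ and choosing $n=\lfloor \log(1/\delta)/(2c_2)\rfloor$ keeps every summand of size $\gtrsim\log(1/\delta)$, and summing over the $\asymp\log(1/\delta)$ coordinates yields $\log N\gtrsim\log^2(1/\delta)$, as required. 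I would note in passing that, because $j_0$ is held fixed, the exponent $\alpha$ from \eqref{E:conditions} never enters, which is consistent with the $\alpha$-independence of the lower bound in \eqref{E:cA}.

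The step I expect to be most delicate is the simultaneous balancing of three requirements: using enough coordinates ($n\asymp\log(1/\delta)$), retaining enough grid points in each (of order $e^{\Omega(\log(1/\delta))}$), and keeping the whole box inside $\cA$. The geometric decay of the weights $r_{j_0}^{k}$ coarsens the high-degree grids, so $n$ cannot be pushed much beyond $\log(1/\delta)/c_2$ before the innermost summands turn negative; verifying that the optimal $n$ still leaves each of the $\asymp\log(1/\delta)$ summands of order $\log(1/\delta)$—rather than letting the $O(\log n)$ losses or the $\ell^2$-versus-sup slack erode the bound—is the crux of the estimate.
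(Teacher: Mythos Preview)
Your argument is correct and takes a genuinely different route from the paper's. The paper first proves a lemma of the form
\[
d(f,g)\ \ge\ \sum_{k\ge2}\frac{\lambda_k r_k^k}{2}\,|a_k-b_k|,
\]
obtained by using the $k$-th summand of the metric together with the Cauchy estimate on the circle $|z|=r_k$ to extract the $k$-th coefficient; the resulting weights $\lambda_k r_k^k$ are then controlled via both halves of \eqref{E:conditions} (in particular the bound $r_k^k\ge e^{-ck^{1-\alpha}}$), and a one-parameter grid $t_k\in\{1,\dots,K\}$ in each coordinate is used, with $K=\rho^{-n}$ and $\rho<\lambda$, to reach $N_\cA(\rho^{2n})\ge\rho^{-n(n-1)}$.

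You instead freeze a single index $j_0$, pass from the sup-norm on $|z|\le r_{j_0}$ to the $L^2$-norm on the circle by Parseval, and read off the coefficient separation from that one term. Your geometric decay $r_{j_0}^k$ plays the same role as the paper's $\lambda_k r_k^k\gtrsim\rho^k$, and your choice $n\asymp\log(1/\delta)/\log(1/r_{j_0})$ corresponds to the paper's $\delta\asymp\rho^{2n}$. The two constructions and counts are then essentially parallel. What your approach buys is that it never invokes \eqref{E:conditions}: any single pair $(\lambda_{j_0},r_{j_0})$ with $\lambda_{j_0}>0$ and $0<r_{j_0}<1$ suffices, which explains transparently why $\alpha$ is absent from the lower bound. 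What the paper's approach buys is a clean, reusable $\ell^1$-type coefficient inequality for $d$ on~$\cA$ that may be of independent interest, at the cost of using the asymptotics of both sequences. Your worry about the ``delicate balancing'' is well placed but resolves exactly as you indicate: with $n=\lfloor\log(1/\delta)/(2c_2)\rfloor$ each of the $n$ summands is $\log(1/\delta)-O(\log\log(1/\delta))$, and the product of the per-coordinate grid counts gives $\log N\gtrsim\log^2(1/\delta)$.
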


To prove this result, it is first convenient to establish a lemma.

\begin{lemma}\label{L:cA}
Let $f,g\in \cA$, say $f(z):=z+\sum_{k\ge2}a_kz^k$ 
and $g(z):=z+\sum_{k\ge2}b_kz^k$. Then
\[
d(f,g)\ge\sum_{k\ge2}(\lambda_k r_k^k/2)|a_k-b_k|.
\]
\end{lemma}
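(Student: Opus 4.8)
The plan is to isolate, for each $k\ge2$, the single term $j=k$ in the defining series \eqref{E:metric} and to recover the Taylor coefficient $a_k-b_k$ from the size of $f-g$ on the circle $|z|=r_k$ by means of a Cauchy coefficient estimate. Writing $c_k:=a_k-b_k$, the difference $h:=f-g=\sum_{k\ge2}c_kz^k$ is holomorphic on $\DD$, and Cauchy's integral formula for the coefficients gives $|c_k|\le r^{-k}\max_{|z|=r}|h(z)|$ for every $r\in(0,1)$. Taking $r=r_k$ and invoking the maximum modulus principle to replace the circle by the closed disk, I obtain
\[
\max_{|z|\le r_k}|f(z)-g(z)|\ge r_k^k|c_k|.
\]

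Next I would deal with the truncation $\min\{1,\cdot\}$, which is the only real subtlety. Because $f,g\in\cA$ we have $\sum_{k\ge2}k|a_k|\le1$ and $\sum_{k\ge2}k|b_k|\le1$, whence $|c_k|\le|a_k|+|b_k|\le2/k\le1$ for every $k\ge2$. Since also $r_k^k<1$, it follows that $r_k^k|c_k|\le1$, so the truncation is in fact inactive and
\[
\min\bigl\{1,\,r_k^k|c_k|\bigr\}=r_k^k|c_k|\ge \tfrac12 r_k^k|c_k|.
\]
The factor $\tfrac12$ is retained only for safety; more generally one has $\min\{1,x\}\ge x/2$ whenever $x\le2$, and it is precisely the coefficient bound from membership in $\cA$ that guarantees we are in this regime.

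Finally, since every summand in \eqref{E:metric} is nonnegative, I would keep only the terms with index $j=k$ for $k\ge2$ and discard the rest, obtaining
\[
d(f,g)\ge\sum_{k\ge2}\lambda_k\min\Bigl\{1,\max_{|z|\le r_k}|f(z)-g(z)|\Bigr\}\ge\sum_{k\ge2}\frac{\lambda_k r_k^k}{2}|a_k-b_k|,
\]
which is the asserted inequality. The whole argument rests on the a priori coefficient bound $|c_k|\le2/k$; without it the truncation could swallow large coefficients, and the lower estimate on each diagonal term would break down. This is the step I expect to be the crux, though here it reduces to a clean consequence of the definition of $\cA$.
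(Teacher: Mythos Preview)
Your proof is correct and follows essentially the same route as the paper's: extract the $k$-th Taylor coefficient of $f-g$ via the Cauchy estimate on the circle of radius $r_k$, then use the coefficient bound coming from membership in $\cA$ to control the truncation $\min\{1,\cdot\}$. The only cosmetic difference is that you observe $|c_k|\le 2/k\le 1$, so the truncation is genuinely inactive, whereas the paper records the slightly weaker bound $|a_k-b_k|\le 2$ and invokes $\min\{1,x\}\ge x/2$ for $x\le2$; the factor $1/2$ thus plays a real role in the paper's version but is, as you note, only kept ``for safety'' in yours.
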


\begin{proof}
The standard Cauchy estimate for Taylor coefficients gives that
\[
|a_k-b_k|\le \max_{|z|\le r_k}|f(z)-g(z)|/r^k.
\]
Also, since $|a_k|,|b_k|,r_k\le1$, we have $|a_k-b_k|r_k^k/2\le1$.
Combining these observations, we obtain
\begin{align*}
d(f,g)
&=\sum_{k\ge1}\lambda_k \min\Bigl\{1,\max_{|z|\le r_k}|f(z)-g(z)|\Bigr\}\\
&\ge\sum_{k\ge2}\lambda_k\min\Bigl\{1,|a_k-b_k|r_k^k\Bigr\}\\
&\ge\sum_{k\ge2}\lambda_k|a_k-b_k|r_k^k/2.\qedhere
\end{align*}
\end{proof}

\begin{proof}[Proof of Theorem~\ref{T:cA}]
Let $n\ge2$, and let $K$ be a large integer depending on $n$,
to be chosen later. Consider the collection of functions
\[
f(z):=z+\sum_{k=2}^n\frac{1}{kn}\frac{t_k}{K}z^k
\quad\Bigl(t_k\in\{1,2,\dots,K\},~k=2,\dots,n\Bigr).
\]
All of these functions belong to $\cA$.
There are $K^{n-1}$ of them.
Also, by Lemma~\ref{L:cA}, the $d$-distance between any two of them,
$f,g$ say, is at least
\[
d(f,g)\ge \Bigl(\min_{2\le k\le n}\frac{\lambda_kr_k^k}{2}\Bigr)\frac{1}{n^2K}.
\]
No ball of radius one third the right-hand side can contain more than one of these functions. It follows that
\begin{equation}\label{E:complicated}
N_{\cA}\Bigl(\frac{1}{3}\Bigl(\min_{2\le k\le n}\frac{\lambda_kr_k^k}{2}\Bigr)\frac{1}{n^2K}\Bigr)\ge K^{n-1}.
\end{equation}
By our assumptions, there exists $\alpha>0$ such that $1-r_k\asymp k^{-\alpha}$.
It follows that $r_k^k\ge e^{-ck^{1-\alpha}}$ for some  constant~$c>0$.
Since also $\lambda_k\ge C \lambda^k$ for some constant $C>0$, we have
\[
\frac{1}{3}\Bigl(\min_{2\le k\le n}\frac{\lambda_kr_k^k}{3}\Bigr)\frac{1}{n^2}
\ge \frac{C}{6}\exp\Bigl(-n\log(1/\lambda)-cn^{1-\alpha}-2\log n\Bigr).
\]
If we choose $\rho\in(0,\lambda)$, then 
$n\log(1/\lambda)+cn^{1-\alpha}+2\log n\le n\log(1/\rho)$ for all sufficiently large $n$,
and for these $n$ we then have
\begin{equation}\label{E:technical}
\Bigl(\min_{2\le k\le n}\frac{\lambda_kr_k^k}{6}\Bigr)\frac{1}{n^2}\ge\rho^n.
\end{equation}
Reducing $\rho$, if necessary, we can ensure that this inequality holds for all $n$,
and also that $1/\rho$ is an integer. 
Note that $\rho$ may depend on the sequences $(\lambda_k)$ and $(r_k)$,
but it is independent of $K$.
Combining the inequalities \eqref{E:complicated} and  \eqref{E:technical}, we obtain
\[
N_\cA(\rho^n/K)\ge K^{n-1}.
\]
We now make our choice of $K$: it is $K:=\rho^{-n}$. This gives
\[
N_\cA(\rho^{2n})\ge \rho^{-n(n-1)}.
\]
As this holds for each $n\ge2$, we deduce that \eqref{E:cA} holds.
\end{proof}

\section{Proof of the upper bound in Theorem~\ref{T:main}}\label{S:upper}

This time, our starting point is the famous  result of de Branges \cite{dB85} that proved the Bieberbach conjecture.

\begin{theorem}\label{T:Bieberbach}
Let $f\in\cS$, say $f(z)=\sum_{k\ge0}a_kz^k$.
Then $|a_k|\le k$ for all $k$.
\end{theorem}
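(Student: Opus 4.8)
The statement is the \emph{Bieberbach conjecture}, and in the context of this paper the intended ``proof'' is simply the citation to de~Branges \cite{dB85}: this is a celebrated and very deep theorem, and reproducing its argument in full is far beyond the scope of the present note. The plan is therefore to invoke it as a black box and feed the conclusion $|a_k|\le k$ into the covering-number estimate. For orientation, however, I would recall the overall architecture of the argument, since it explains why so clean a coefficient bound should hold and where the real difficulty lies.

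The first move is a reduction that replaces $|a_k|\le k$ by a stronger but more symmetric statement. Writing $\log(f(z)/z)=2\sum_{k\ge1}\gamma_k z^k$ for the logarithmic coefficients of $f\in\cS$, the \emph{Milin conjecture} asserts a bound on a weighted sum of the quantities $k|\gamma_k|^2-1/k$. Via the second Lebedev--Milin inequality this implies the \emph{Robertson conjecture} for the odd square-root transform $\sqrt{f(z^2)}$, and Robertson's inequality in turn implies the Bieberbach bound for $f$ itself. So it would suffice to prove Milin's inequality.

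To attack Milin's inequality I would use Loewner's parametric method. Every single-slit mapping embeds in a \emph{Loewner chain} $f(z,t)=e^t(z+\cdots)$ governed by the Loewner differential equation and driven by a continuous unimodular control, and such slit mappings are dense in $\cS$; by continuity of the coefficients it is enough to treat them. This converts the static coefficient problem into the study of a system of ordinary differential equations for the Taylor coefficients along the flow. De~Branges introduces an auxiliary family of weight functions $\tau_k(t)$, determined by a linear system of ODEs with prescribed boundary behaviour, and reduces Milin's inequality to the assertion that a certain functional is monotone in $t$ along the flow.

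The crux --- and the step that resisted proof for decades --- is the positivity needed to force this monotonicity, which amounts to a definite sign condition on the weights $\tau_k(t)$. This is exactly where the \emph{Askey--Gasper inequality} on nonnegative sums of Jacobi polynomials enters, and it is what makes the whole scheme work. I expect this positivity to be the main obstacle in any from-scratch proof; Weinstein later found an alternative route that trades the explicit special-function machinery for a different positivity argument, but positivity of this flavour seems unavoidable. In the present paper none of this is reproved: the single conclusion $|a_k|\le k$ is used directly to bound the tail of the Taylor series of each $f\in\cS$ on the disks $|z|\le r_j$, which is precisely what is needed to control the covering number and establish the upper bound in \eqref{E:main}.
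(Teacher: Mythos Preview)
Your proposal is correct and matches the paper's treatment exactly: the paper does not prove Theorem~\ref{T:Bieberbach} but simply attributes it to de~Branges \cite{dB85}, remarking that Littlewood's weaker bound $|a_k|\le ek$ would in fact suffice. Your outline of the Milin--Robertson reduction, Loewner chains, and the Askey--Gasper positivity step is accurate and goes well beyond what the paper itself records.
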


Special cases of this result had previously been established by various mathematicians for small values of $k$.
Much earlier, Littlewood \cite{Li25} had proved 
the slightly weaker (but much easier) estimate $|a_k|\le ek$ 
 valid for all~$k$.
In fact Littlewood's estimate would do for our purposes.

Again, it is convenient to introduce some notation.
We denote by $\cB$  the family of functions $f(z):=\sum_{k\ge1}a_kz^k$
such that $|a_k|\le k$ for all~$k$.
By virtue of Theorem~\ref{T:Bieberbach}, we have $\cS\subset\cB$,
and so $N_\cS(\delta)\le N_\cB(\delta/2)$.
Thus, to establish the upper bound in Theorem~\ref{T:main},
it suffices to prove the following theorem.

\begin{theorem}\label{T:cB}
Let $d$ be the metric on $\cB$ given by \eqref{E:metric},
where  $(\lambda_j)$ and $(r_j)$ satisfy~\eqref{E:conditions}. Then
\begin{equation}\label{E:cB}
 \log N_\cB(\delta)\lesssim \log^{2+\alpha}(1/\delta)\quad (\delta\to0^+).
\end{equation}
\end{theorem}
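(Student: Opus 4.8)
The plan is to bound $N_\cB(\delta)$ by constructing an explicit $\delta$-net. The family $\cB$ consists of power series $\sum_{k\ge1}a_kz^k$ with $|a_k|\le k$. The key observation is that, because the metric weights higher-order Taylor data with the rapidly decaying factor $\lambda_j\asymp\lambda^j$ together with the evaluation radius $r_j$, functions in $\cB$ are well-approximated in the $d$-metric by their low-order Taylor polynomials. Thus I would first split the task into two parts: (i) truncate $f$ at some degree $N=N(\delta)$, controlling the tail contribution to $d$; and (ii) discretize the finitely many coefficients $a_1,\dots,a_N$ on a fine grid in the disk $\{|w|\le k\}$.

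First I would estimate the tail. For $f,g\in\cB$ that agree on their first $N$ coefficients, the difference $f-g$ is a power series starting at degree $N+1$ whose $k$-th coefficient is bounded by $2k$. Using the crude bound $\max_{|z|\le r_j}|f(z)-g(z)|\le\sum_{k>N}2k\,r_j^k$, and combining with $\sum_j\lambda_j<\infty$, one gets that the full tail contribution to $d(f,g)$ is at most something like $C\sum_j\lambda_j\min\{1,\sum_{k>N}2kr_j^k\}$. The point is to show this is $\le\delta/2$ once $N$ is chosen of order $\log(1/\delta)$: for small $j$ (where $r_j$ is bounded away from $1$) the geometric tail $\sum_{k>N}kr_j^k$ is exponentially small in $N$, while for large $j$ the prefactor $\lambda_j\asymp\lambda^j$ kills the contribution. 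Balancing these, $N\asymp\log(1/\delta)$ should suffice, and I expect this balancing to be the main obstacle, since the factor $1-r_j\asymp j^{-\alpha}$ means $r_j^k$ does not decay uniformly in $j$, and it is precisely the interplay between the slow approach $r_j\to1$ and the number $N$ that produces the exponent $2+\alpha$ rather than $2$.

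Second, having fixed $N\asymp\log(1/\delta)$, I would discretize. Each coefficient $a_k$ ranges over the disk of radius $k\le N$, which I cover by a grid of mesh $\eta$; this requires roughly $(k/\eta)^2\le(N/\eta)^2$ points per coefficient, hence at most $(N/\eta)^{2N}$ lattice functions in total. I then need $\eta$ small enough that replacing each $a_k$ by its nearest grid point changes $d$ by at most $\delta/2$. By the Cauchy-type reasoning (or directly, since $\max_{|z|\le r_j}|\sum_{k\le N}(a_k-a_k')z^k|\le\sum_{k\le N}\eta\, r_j^k\le\eta/(1-r_j)$ times suitable factors), choosing $\eta$ polynomially small in $\delta$ — concretely $\eta\asymp\delta/N$ or so, whence $\log(1/\eta)\asymp\log(1/\delta)$ — will do.

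Finally I would assemble the count. The total number of net functions is at most $(N/\eta)^{2N}$, so
\[
\log N_\cB(\delta)\lesssim N\log(N/\eta)\lesssim N\log(1/\delta).
\]
With $N\asymp\log^{1+\alpha}(1/\delta)$ (the exponent forced by the tail-balancing step, where accommodating the slow convergence $1-r_j\asymp j^{-\alpha}$ costs an extra factor $\log^\alpha(1/\delta)$ in the required truncation degree) this yields $\log N_\cB(\delta)\lesssim\log^{1+\alpha}(1/\delta)\cdot\log(1/\delta)=\log^{2+\alpha}(1/\delta)$, as claimed. The delicate point throughout is the first step: determining the precise growth of the truncation degree $N$ needed to make the tail negligible, since this is where the exponent $2+\alpha$ originates and where the condition $1-r_j\asymp j^{-\alpha}$ enters essentially.
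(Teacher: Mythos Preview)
Your approach is essentially the same as the paper's: truncate each $f\in\cB$ to degree $N$, bound the resulting $d$-tail (the paper does this in Lemma~\ref{L:cB}, obtaining the bound $\exp(-cN^{1/(1+\alpha)})$ by splitting the sum over $j$ at $j\approx N^{1/(1+\alpha)}$, which forces $N\asymp\log^{1+\alpha}(1/\delta)$ as you correctly state at the end), then discretize the first $N$ coefficients on a grid and count. Your intermediate claim that $N\asymp\log(1/\delta)$ suffices is not right---the slow approach $r_j\to1$ genuinely costs the extra factor $\log^\alpha(1/\delta)$ in the truncation degree---but you recognize this and land on the correct order.
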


For the proof, we need the following approximation lemma.

\begin{lemma}\label{L:cB}
There exists a constant $c>0$ with the following property:
for each $f\in\cB$ and $n\ge1$, there is a polynomial $p\in\cB$
such that $\deg p\le n$ and
\[
d(f,p)\le \exp(-cn^{1/(1+\alpha)}).
\]
\end{lemma}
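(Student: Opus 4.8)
The plan is to take $p$ to be the $n$-th Taylor truncation of $f$. If $f(z)=\sum_{k\ge1}a_kz^k\in\cB$, I set $p(z):=\sum_{k=1}^n a_kz^k$. Since the coefficients of $p$ are $a_1,\dots,a_n,0,0,\dots$ and $f\in\cB$ forces $|a_k|\le k$, the truncation $p$ again lies in $\cB$ and satisfies $\deg p\le n$; thus the only thing left is to estimate $d(f,p)$, where $f(z)-p(z)=\sum_{k>n}a_kz^k$. The first step is a pointwise tail bound: for $|z|\le r$, using $|a_k|\le k$ and shifting the index by $n$ in the geometric series,
\[
|f(z)-p(z)|\le\sum_{k>n}k r^k\le \frac{(n+1)r^n}{(1-r)^2}.
\]
Hence the quantity $M_j:=\max_{|z|\le r_j}|f(z)-p(z)|$ appearing in \eqref{E:metric} is controlled by $\sum_{k>n}kr_j^k$, which is increasing in $r_j$.

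The second step is to split the defining sum \eqref{E:metric} at a threshold $J$ to be chosen later:
\[
d(f,p)=\sum_{j\le J}\lambda_j\min\{1,M_j\}+\sum_{j>J}\lambda_j\min\{1,M_j\}.
\]
For the tail $j>J$ I bound $\min\{1,M_j\}\le1$ and use $\lambda_j\asymp\lambda^j$ to get $\sum_{j>J}\lambda_j\lesssim\lambda^J$. For the head $j\le J$, the condition $1-r_j\asymp j^{-\alpha}$ in \eqref{E:conditions} gives $r_j\le 1-c_1 J^{-\alpha}=:R$ for every $j\le J$ (since $j^{-\alpha}\ge J^{-\alpha}$), whence $M_j\le\sum_{k>n}kR^k\le (n+1)R^n/(1-R)^2$. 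As $\sum_j\lambda_j<\infty$, the head is $\lesssim (n+1)R^n/(1-R)^2$, and with $1-R=c_1J^{-\alpha}$ and $R^n\le\exp(-c_1 nJ^{-\alpha})$ this becomes $\lesssim (n+1)J^{2\alpha}\exp(-c_1 nJ^{-\alpha})$.

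The final step is to balance the two error terms. The tail decays like $\exp(-J\log(1/\lambda))$ and the head like $\exp(-c_1 nJ^{-\alpha})$, the polynomial prefactor aside; equating the exponents via $J\asymp nJ^{-\alpha}$ forces the choice $J\asymp n^{1/(1+\alpha)}$. With this $J$ both exponents are of order $n^{1/(1+\alpha)}$, and after absorbing the prefactor $(n+1)J^{2\alpha}$ into a slightly smaller constant one obtains $d(f,p)\le\exp(-cn^{1/(1+\alpha)})$ for all large $n$. The finitely many remaining values of $n$ present no difficulty, since $d(f,p)\le\sum_j\lambda_j$ is uniformly bounded and these cases are accommodated by reducing $c$; in any event only large $n$ are invoked in the proof of Theorem~\ref{T:cB}.

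I expect the one delicate point to be this optimization, namely verifying that the prefactors produced by the Cauchy-type tail estimate (the $J^{2\alpha}$ and the factor $n+1$) can genuinely be swallowed by the exponential. This is precisely why the exponent $1/(1+\alpha)$ rather than $1$ appears: the cost of pushing $r_j$ close to $1$ on the first $J$ disks is what limits the gain, and this trade-off is the feature that ultimately yields the $\log^{2+\alpha}(1/\delta)$ growth in Theorem~\ref{T:cB}.
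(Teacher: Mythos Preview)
Your proof is correct and follows essentially the same approach as the paper: both take the $n$-th Taylor polynomial, reduce to estimating $\sum_{j\ge1}\lambda_j\sum_{k>n}kr_j^k$, and split the $j$-sum at the threshold $J\asymp n^{1/(1+\alpha)}$ to balance the geometric decay in $\lambda_j$ against the decay from $r_j^n$. The only cosmetic difference is that for large $j$ you invoke $\min\{1,M_j\}\le 1$ directly, whereas the paper carries the full tail estimate and then drops the factor $\exp(-c_1nj^{-\alpha})\le 1$; the resulting bounds are the same.
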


\begin{proof}
Let $f\in\cB$, say $f(z)=\sum_{k\ge1}a_kz^k$.
Let $n\ge1$ and set $p(z):=\sum_{k=1}^n a_kz^k$.
Clearly $p\in\cB$ and $\deg p\le n$. Also
\[
d(f,p)\le \sum_{j\ge1}\lambda_j \max_{|z|\le r_j}\Bigl|\sum_{k\ge n+1}a_kz^k\Bigr|
\le  \sum_{j\ge1}\Bigl(\lambda_j \sum_{k\ge n+1}kr_j^k\Bigr).
\]
So, to prove the lemma, it suffices to show that
\begin{equation}\label{E:estimate}
\sum_{j\ge1}\Bigl(\lambda_j \sum_{k\ge n+1}kr_j^k\Bigr)
\le \exp(-cn^{1/(1+\alpha)}) \quad(n\ge1),
\end{equation}
where $c>0$ is a constant independent of $n$.

Now, for $r\in(0,1)$, we have
\[
\sum_{k\ge n+1}kr^k=r\frac{d}{dr}\Bigl(\sum_{k\ge n+1}r^k\Bigr)
=r\frac{d}{dr}\Bigl(\frac{r^{n+1}}{1-r}\Bigr)
\le (n+2)\frac{r^{n+1}}{(1-r)^2}.
\]
Therefore
\[
\sum_{j\ge1}\Bigl(\lambda_j \sum_{k\ge n+1}kr_j^k\Bigr)
\le \sum_{j\ge1}\lambda_j (n+2)\frac{r_j^{n+1}}{(1-r_j)^2}.
\]
Recalling that $\lambda_j\asymp\lambda^j$
and $1-r_j\asymp j^{-\alpha}$, we see that there are constants $C_1,c_1>0$, independent of $n$,
such that
\[
\sum_{j\ge1}\lambda_j (n+2)\frac{r_j^{n+1}}{(1-r_j)^2}
\le C_1n \sum_{j\ge1}\lambda^j j^{2\alpha}\exp(-c_1nj^{-\alpha}).
\]
Now
\begin{align*}
\sum_{j\le n^{1/(1+\alpha)}}\lambda^j j^{2\alpha}\exp(-c_1nj^{-\alpha})
&\le \sum_{j\le n^{1/(1+\alpha)}}\lambda^j n^{2\alpha/(1+\alpha)}\exp(-c_1n^{1/(1+\alpha)})\\
&\le \frac{1}{1-\lambda}n^{2\alpha/(1+\alpha)}\exp(-c_1n^{1/(1+\alpha)}).
\end{align*}
Also 
\begin{align*}
\sum_{j> n^{1/(1+\alpha)}}\lambda^j j^{2\alpha}\exp(-c_1nj^{-\alpha})
&\le\sum_{j> n^{1/(1+\alpha)}}\lambda^j j^{2\alpha}\\
&\le \Bigl(\sup_{j\ge1}\lambda^{j/2}j^{2\alpha}\Bigr)\sum_{j> n^{1/(1+\alpha)}}\lambda^{j/2} \\
&\le C_2\frac{(\sqrt{\lambda})^{n^{1/(1+\alpha)}}}{1-\sqrt\lambda},
\end{align*}
where $C_2$ is another constant.
Combining  all these inequalities, we see that \eqref{E:estimate} holds
for any positive constant $c<\min\{c_1,\log(1/\sqrt{\lambda})\}$, at least for all sufficiently large $n$. 
Reducing $c$ further, 
if necessary, we can ensure that \eqref{E:estimate}  holds for all $n$.
\end{proof}

\begin{remark}
The estimate in Lemma~\ref{L:cB} is sharp. Indeed, consider
the so-called Koebe function, namely
$f(z):=z/(1-z)^2=\sum_{k\ge1}kz^k$. 
This belongs to $\cB$, and even to~$\cS$. Also, for every polynomial $p$ of degree $n$, whether it lies in $\cB$ or not, we have
\[
\max_{|z|\le r_j}|f(z)-p(z)|^2
\ge \frac{1}{2\pi r_j}\int_{|z|=r_j}|f(z)-p(z)|^2\,|dz|
\ge\sum_{k\ge n+1}k^2r_j^{2k}\ge r_j^{2n+2}.
\]
It follows that
$d(f,p)\ge \sum_{j\ge1}\lambda_j r_j^{n+1}$.
Retaining just the  term with $j=\lfloor n^{1/(1+\alpha)}\rfloor$,
we see that 
\[
d(f,p)\ge \exp(-cn^{1/(1+\alpha)}),
\]
where $c>0$ is a constant independent of $p$ and $n$.
\end{remark}

\begin{proof}[Proof of Theorem~\ref{T:cB}]
Let $n\ge1$, and let $K$ be a large integer depending on $n$,
to be chosen later.
Consider the collection of polynomials
\[
q(z):=\sum_{k=1}^n k\frac{(s_k+it_k)}{\sqrt{2}K}z^k
\quad(s_k,t_k\in\{-K,-(K-1),\dots,K\},~k=1,\dots,n).
\]
Clearly these polynomials all belong to $\cB$, and there are $(2K+1)^{2n}$
of them.
Given an arbitrary polynomial $p\in\cB$ with $\deg p\le n$,
there is a member $q$ of this collection whose coefficients all
lie within $n/K$ of the corresponding coefficients of $p$,
and consequently
\[
d(p,q)= \sum_{j\ge1}\lambda_j\min\Bigl\{1, \max_{|z|\le r_j}|p(z)-q(z)|\Bigr\}
\le \sum_{j\ge1}\lambda_j \frac{n^2}{K}=Cn^2/K,
\]
where $C$ is a constant independent of $n$.
Combining this inequality with the result of Lemma~\ref{L:cB},
we see that,
if we set  $\delta:=Cn^2/K+\exp(-cn^{1/(1+\alpha)})$,
then  the $\delta$-balls around the polynomials $q$ cover $\cB$.
Hence
\[
N_\cB\Bigl(Cn^2/K+\exp(-cn^{1/(1+\alpha)})\Bigr)\le (2K+1)^{2n}.
\]
We now choose $K$: it is $K:=\lceil Cn^2\exp(cn^{1/(1+\alpha)})\rceil$. This gives
\begin{align*}
N_\cB\Bigl(2\exp(-cn^{1/(1+\alpha)})\Bigr)
&\le (2\lceil Cn^2\exp(cn^{1/(1+\alpha)})\rceil+1)^{2n}\\
&=\exp\Bigl(2n\log\Bigl(2\lceil Cn^2\exp(cn^{1/(1+\alpha)})\rceil+1\Bigr)\Bigr)\\
&\le \exp\Bigl(2n\Bigl(\log(n^2)+cn^{1/(1+\alpha)}+O(1)\Bigr)\Bigr)\\
&\le \exp\Bigl(2cn^{(2+\alpha)/(1+\alpha)}+O(n\log n)\Bigr).
\end{align*}
From this it is straightforward to deduce that \eqref{E:cB} holds.
\end{proof}

\section{Concluding remarks and questions}\label{S:conclusion}

(1) Among several well-known subclasses of  $\cS$,
there is the class $\cC$ of  convex functions, 
namely the set of those $f\in\cS$ such that $f(\DD)$ is a convex set. 
Theorem~\ref{T:main} holds with $\cS$ replaced by $\cC$.
Indeed, the upper bound is obvious. As for the lower bound,
it suffices to repeat the same proof, using the following
result of Goodman~\cite{Go57} in place of Lemma~\ref{L:cA}:
if $f(z):=z+\sum_{k\ge2}a_kz^k$, where $\sum_{k\ge2}k^2|a_k|\le1$, then $f\in\cC$.

(2)  Which, if either, of the bounds in 
\eqref{E:main} is sharp? The bounds are based on the inclusions
$\cA\subset\cS\subset\cB$. The spaces $\cA$ and $\cB$ are easier to handle than $\cS$ because they resemble infinite cartesian products. However,
to answer the question above, we probably need to use finer properties of $\cS$ to determine which of $\cA$ or $\cB$ better approximates $\cS$.

\section*{Acknowledgement}
The authors thank the anonymous referees for their careful reading of the manuscript,
and for their corrections and helpful suggestions.

\end{document}